 \newcommand{\Spec}{\operatorname{Spec}}
 \newcommand{\real}{\operatorname{Re}}
 \newcommand{\imag}{\operatorname{Im}}
 \newcommand{\diag}{\operatorname{diag}}
 \newcommand{\sign}{\operatorname{sgn}}
\newtheorem{theorem}{Theorem}
\newtheorem{assumption}{Assumption}
\newtheorem{proposition}{Proposition}
\newtheorem{problem}{Problem}
\newtheorem{example}{Example}
\newtheorem{remark}{Remark}
\newcommand {\R}{\mathbb R}
\newcommand {\C}{\mathbb C}
\newcommand {\T}{\mathbb T^n}
\newcommand{\be}{\begin{equation}}
\newcommand{\ee}{\end{equation}}
\newcommand{\Z}{\mathbb Z}
\renewcommand{\Re}{\operatorname{Re}}
\title{\LARGE \bf
An application of the  mean motion problem to  
time-optimal control}
\renewcommand{\Re}{\operatorname{Re}}
\author{Omri Dalin, Alexander Ovseevich and Michael Margaliot\thanks{The authors are with the School of Elec. Eng., Tel Aviv University, Israel~6997801. Correspondence: michaelm@tauex.tau.ac.il \\
This research is partially supported by a research grant form the Israeli Science Foundation~(ISF).} }
\begin{document}
\maketitle

\begin{abstract}
We consider  time-optimal controls of a  controllable linear  system   with a scalar control on a  long  time interval. It is well-known that if all the eigenvalues of the matrix describing the linear system dynamics 
are real then any time-optimal control has a bounded number of
switching  points, where the bound
does not depend on the length of the time interval.
We consider  the case where the governing matrix   has purely imaginary  eigenvalues, and show that then, in the generic case, the number of switching points is  bounded from below by a linear function of the length of the time
interval.   The    proof  is based on   relating the    switching function in the optimal control problem to
the   mean motion problem that dates back to Lagrange and was solved  by Hermann Weyl.
\end{abstract}

\begin{IEEEkeywords}
Nice reachability,
Bohl–Weyl–Wintner~(BWW) formula, mean motion problem, Bessel functions.
\end{IEEEkeywords}

\section{Introduction}
Consider the single-input
linear control system
\begin{align}\label{eq:lin_cont_sys}
\dot x&=Ax + b u,
\end{align}
 with~$x:[0,\infty)\to \R^n$, $A\in\R^{n\times n}$, $b\in\R^n$,
 and~$u:[0,\infty)\to[-1,1]$.
We assume throughout that the system is controllable~\cite{sontag_book_control_theory}.   Fix arbitrary~$p,q\in\R^n$, and consider the problem of finding a measurable  control~$u$, taking values in~$[-1,1]$ for all~$t\geq 0$, that steers the system from~$x(0)=p$ to~$x(T)=q$ in minimal time~$T$.

It is well-known that such a control 
exists and satisfies~$u(t)=\sign (m(t))$,
where  the switching function $m:[0,T]\to\R$ is given by
\begin{equation}\label{switching}
 m(t)=p^\top (t)b,
\end{equation}
where the ``adjoint state vector'' $p:[0,T]\to\R^n\setminus\{0\}$   of the Pontryagin maximum principle satisfies
\begin{equation}\label{adjoint}\dot p(t)=- A^\top p(t).
\end{equation}
Under the controllability assumption, $m(t)$ has a finite number of zeros on any open  time interval, so 
  a time-optimal control is  ``bang-bang'',  with switching points at isolated
  time instants~$t_i$ such that~$m(t_i)=0$.

An interesting  question is to determine     the number of switching points   in the interval~$[0,T]$, denoted~$N(T)$. This   has important implications. For example,
it is well-known~\cite[Theorem~6-8]{athans_falb} that if all the eigenvalues of~$A$  are real then~$N(T)\leq n-1$ for all~$T>0$,  implying that the solution of the time-optimal control problem reduces to the finite-dimensional problem of  determining   up to~$n-1$ values~$t_1,\dots,t_{n-1}$.  Furthermore,   bounding the number of switching points in time-optimal controls
is also important for nice reachability results, that is, finding a set of controls~$\mathcal{W}$ with ``nice'' properties such that the  problem of steering the system between two points can always  be solved using a control from~$\mathcal W$ (see, e.g.~\cite{suss_nice_reach,levinson_select,SHARON_3_nilp}).

Typically the value of the adjoint state vector  is   not known explicitly, so it is natural to      study an ``abstract'' 
switching
function
\be\label{eq:abst_m}
m(t;p,b, A):=p^\top e^{-At}b
\ee
with~$p,b\in\R^n\setminus\{0\}$ and~$A\in\R^{n\times n}$. 
 The zeroes of the switching function are called the switching points.
 
 Here, we consider the case where all  the eigenvalues of the matrix~$A$ are  purely imaginary. Our main contributions include the following: 
 \begin{itemize}
     \item We develop a new approach for analyzing  the zeros of~$m(t)$ using the classical, yet generally forgotten, problem of mean motion that was solved  by Hermann Weyl in 1938~\cite{Weyl_meanmotion};
     \item  Using this new approach we show that 
   generically  the number~$N(T)$ of switching   points on the interval~$[0,T]$ 
satisfies
\begin{equation}\label{asymp_in}
N(T)\geq c T \text{ for all sufficiently large } T,
\end{equation}
where~$c$ is a positive constant. Our approach also
 provides a   closed-form expression
for~$c$ in terms of integrals of Bessel functions.
 \end{itemize}

The next section  reviews the mean motion problem  including the  Bohl–Weyl–Wintner (BWW) formula that allows to derive an explicit expression for~$c$.
For the sake of completeness,   we include more details 
on the proof  of this formula in the Appendix.
Section~\ref{sec:density}
 states and proves  the main result. 
 The final section concludes, and describes some directions for further research.


We use standard notation.
$\R^n$ [$\C^n$]
is the~$n$-dimensional vector space over the field of real [complex] scalars, and we abbreviate~$\R^1$ to~$\R$ [$\C^1$ to~$\C$].
For a complex
number~$z\in\C$, $|z|$ is the absolute value of~$z$,
and~$ \arg (z)$ is the argument of~$z$, so the polar representation of~$z$ is
$z=|z| e^{i\arg (z)}$. Also,
  $\real(z)$  [$\imag(z)$] denotes the  real [imaginary] part of~$z$,  and~$\bar z=|z| e^{-i\arg (z)}$ is the complex conjugate   of~$z$.
Vectors [matrices] are denoted by small [capital] letters.
The transpose of a matrix~$A$ is~$A^\top$.

\section{Preliminaries}
In this section, we
review several results
from the theory of mean motion that will be useful for the analysis of time-optimal controls.

\subsection{The problem of mean motion}
The mean motion problem
goes back to Lagrange's analysis of the secular perturbations of the major planets.
It provides a way to calculate the average rate at which an object orbits a central body.

Consider the complex function~$z:[0,\infty)\to \C$ defined by
  \begin{equation}\label{eq:rotating}
z(t) : =\sum_{k=1}^n a_k e^{i(\lambda_k t+\mu_k)} ,
\end{equation}
where~$a_k,\lambda_k,\mu_k\in \R$.
Note that this may be interpreted as a weighted sum of linear oscillators with different frequencies. 

Assume that~$z(t)\not =0$ for all~$t\geq 0$.
Then~$\Phi(t):=\arg( z(t))$ is a continuous function.  
\begin{problem}[mean motion]\label{prob:mean_motion}
Determine whether the asymptotic angular velocity of~$z(t)$, that is, the   limit  
 \be\label{eq:def_omeha}
 \Omega:=\lim_{t\to \infty}\frac{\Phi(t)}{t},
 \ee
   exists, and if so,   find  its value.
\end{problem}

\begin{example}
    Suppose that all 
    the~$\lambda_k$s are equal, and we denote their common value by~$\lambda$, then
\begin{align*}
z(t)&=\sum_{k=1}^n a_ke^{i (\lambda t+\mu_k) }\\
&= e^{i\lambda t} \sum_{k=1}^n a_k e^{i \mu_k },
\end{align*}
so
\begin{align*}
    \Phi(t)&= \arg( z(t))\\&= \lambda t+\arg(\sum_{k=1}^n a_k e^{i \mu_k }) ,    
\end{align*}
and~$\Omega=\lim_{t\to \infty}\frac{\Phi(t)}{t}=\lambda$.
\end{example}

\subsection{Bessel functions}
Bessel functions are ubiquitous in mathematics and physics~\cite{watson}.
The~$p$-order Bessel function~$J_p:\C\to \C$  is defined by 
\be\label{eq:p-bessel}
J_p(x) := \frac {(x/2)^p}{  \Gamma(p+\frac1{2}) \sqrt{\pi}}\int_0^\pi \sin^{2p}(\phi)e^{-ix\cos(\phi)} d\phi,
\ee
where~$\Gamma$ is the Gamma function. In the mean motion problem,  we encounter two Bessel functions: 
\[
J_0(x) =  \frac1{\pi}  \int_0^\pi  e^{-ix\cos(\phi)} d\phi,
\]
and
\[
J_1(x) = \frac { x   }{   \pi} \int_0^\pi \sin^{2 }(\phi)e^{-ix\cos(\phi)} d\phi.
\]
These functions appear 
through their connection to   
$n$-dimensional balls and spheres. To explain this,
let~$1_{B^n}:\R^n\to\{0,1\}$ denote the  indicator function of the $n$-dimensional ball, that is,~$1_{B^n}(x)=1$ if~$|x|\leq 1$ and~$1_{B^n}(x)=0$, otherwise.  The Fourier transform of~$1_{B^n}$  is
\begin{align}\label{eq:four_ball}
\hat 1_{B^n}(\xi)&= \frac1{(2\pi)^{n/2}}
\int_{|x|\leq 1} e^{i x^\top  \xi } dx_1\dots dx_n\nonumber\\& = \frac1{(2\pi)^{n/2}}
\int_{|x|\leq 1} e^{-i x^\top  \xi } dx_1\dots dx_n . 
\end{align}
Since~$1_{B^n}(x)$ is a radial function (that is, it only depends only on~$|x|$),
$\hat 1_{B^n}(\xi)$ will   depend on~$|\xi|$ (see, e.g.~\cite[Chapter~IV]{Four_intro}) and it is
  enough  to consider the particular
  case where~$\xi=\begin{bmatrix}
0&\dots&0&r
\end{bmatrix}^\top$,
with~$r>0$. Then,
\begin{align*}
  \hat 1_{B^n}(\xi)&
 = \frac1{(2\pi)^{n/2}} \int_{|x|\leq 1}  e^{-i x_n r} dx_1 \dots dx_n\\
  &=  \frac { V_{n-1}}{(2\pi)^{n/2}}
  \int_{-1}^1 (\sqrt{ 1-x_n^2})^{n-1 }  e^{-i x_n r} dx_n,
\end{align*}
where~$V_{n-1} $ is the volume of the~$(n-1)$-dimensional unit ball.
Setting~$x_n= \cos (\phi)$ and using the fact  that~$V_d=\frac{\pi^{d/2}} { \Gamma(1+(d/2)) } $ gives
   \begin{align*}
  \hat 1_{B^n}(\xi)
  &=  \frac{ 1}{ 2 ^{n/2} \Gamma(1+((n-1)/2))\sqrt{\pi}}
 \\& \times  \int_{0}^\pi \sin^{n}(\phi)    e^{-i \cos(\phi)  r} d \phi ,
\end{align*}
and comparing this with~\eqref{eq:p-bessel} yields
\[ 
 \hat 1_{B^n}(\xi)  = |\xi|^{-n/2}J_{n/2}(|\xi|).
 \]
In particular, for the special case where~$n=2$, we have
\be\label{eq:ind_func_B2}
 \hat 1_{B^2}(\xi)  = |\xi|^{-1}J_{1}(|\xi|).
 \ee

\subsection{The Bohl--Weyl--Wintner formula}
The Bohl--Weyl--Wintner~(BWW) formula~\cite{Wintner-1933} 
gives a closed-form expression in terms of integrals of Bessel functions 
for a   probability function defined on an~$n$-dimensional torus $\T=(\R/2\pi\Z)^n$, that is, the direct product of~$n$ circles.  Let~$\phi_k\in \R/2\pi\Z$, $k=1,\dots,n$, be the angular coordinates on the torus.
Fix~$n$ complex numbers~$a_1,\dots,a_n\in\C$. Associate with every point~$(\phi_1,\dots,\phi_n)\in\T$
  a complex number
\[
z(\phi_1,\dots,\phi_n):=\sum _{k=1}^n a_ke^{i\phi_k}.
\]
Geometrically, this can be interpreted as
the position of the end-point of a multi-link robot arm  where the $k$th  link has length~$|a_k|$, and the
angle between link~$k$
and link~$(k+1)$ is~$\phi_k $ (see Fig.~\ref{fig:viewz}).

\begin{figure} 
\begin{tikzpicture}[scale=1.5]
    \def\aone{2}      
    \def\phiOne{30}   
    \def\atwo{1.5}    
    \def\phiTwo{45}   
    \def\phiExtend{27.5} 

    \pgfmathsetmacro{\Ax}{\aone * cos(\phiOne)}
    \pgfmathsetmacro{\Ay}{\aone * sin(\phiOne)}
    \pgfmathsetmacro{\Bx}{\Ax + \atwo * cos(\phiOne + \phiTwo)}
    \pgfmathsetmacro{\By}{\Ay + \atwo * sin(\phiOne + \phiTwo)}

    \pgfmathsetmacro{\PhiZ}{atan2(\By, \Bx)}

    \coordinate (O) at (0,0);
    \coordinate (A1) at (\Ax,\Ay);
    \coordinate (A2) at (\Bx,\By);

    \draw[->] (-0.5,0) -- (2.5,0) node[below] {$\mathrm{Re}$};
    \draw[->] (0,-0.5) -- (0,2.5) node[left] {$\mathrm{Im}$};

    \draw[->,blue,thick] (O) -- (A1) node[midway, above  ] {\footnotesize $a_1$};
    \draw[->,red,thick] (A1) -- (A2) node[midway, below right] {\footnotesize $a_2$};

    \draw[gray, thick] (O) -- (A2) node[midway, above left] {\footnotesize $|z(\phi)|$};

    \draw[dashed] (A1) --++ (1.2,0);

    \draw[green,thick] (0.4,0) arc (0:\phiOne:0.4);
    \node[green] at (0.5,0.15) {\footnotesize $\phi_1$};

    \draw[purple,thick] ($(A1)+(0.3,0)$) arc (0:\phiTwo+\phiExtend:0.3);
    \node[purple] at ($(A1)+(0.4,0.2)$) {\footnotesize $\phi_2$};

    \draw[orange,thick] (0.7,0) arc (0:\PhiZ:0.7);
    \node[orange] at ({1.1*cos(\PhiZ/2)}, {1*sin(\PhiZ/2) - 0.20}) {\footnotesize $\;\;\;\;\arg(z(\phi))$};


\end{tikzpicture}
\caption{Geometric representation of~$z(\phi)=z(\phi_1,\phi_2)$.\label{fig:viewz}}
\end{figure}
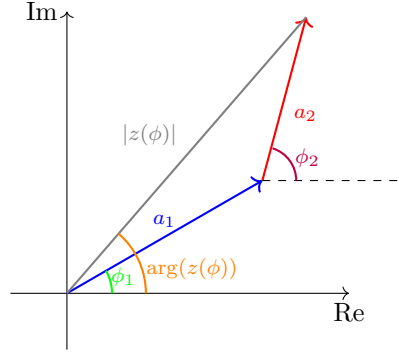

The canonical (Haar)  probability measure on the torus is
\begin{equation}\label{haar}
d\mu=\frac1{(2\pi)^n} d\phi_1\dots d\phi_n.
\end{equation}
Let~$W_n(r)=W_n(r;a_1,\dots,a_n)$ denote the probability 
that~$|z|\leq r$, where $z=z(\phi_1,\dots,\phi_n)$. Thus,
\begin{align}\label{eq:W_n(r)}   W_n(r) =
\int_{ |z|\leq r }d\mu
\end{align}
is the ``volume'' of all the angles in the $n$-dimensional
 torus
yielding~$|z|=|\sum _{k=1}^n a_ke^{i\phi_k}|\leq r$.

For small values of~$n$, $W_n(r)$ can be computed explicitly using geometric arguments. 
The next example demonstrates this. 

 \begin{example}
Consider the case~$n=2$. Fix~$a_1,a_2>0$. 
We compute~$W_2(r)=W_2(r;a_1,a_2)$ for any~$0\leq r\leq a_1+a_2$. 
To do so, we first determine 
all the angles~$0\leq\phi_1,\;\phi_2< 2\pi$ such that~$|z|=|a_1e^{i\phi_1}+a_2e^{i\phi_2}|\leq r$.
Since the calculation of~$W_2(r)$ is 
  invariant with respect to rotations, we  may assume that~$\phi_1=0$ (see Fig.~\ref{fig:two_d}).
  \begin{figure}[H]
 \begin{center}
  \includegraphics[scale=0.6]{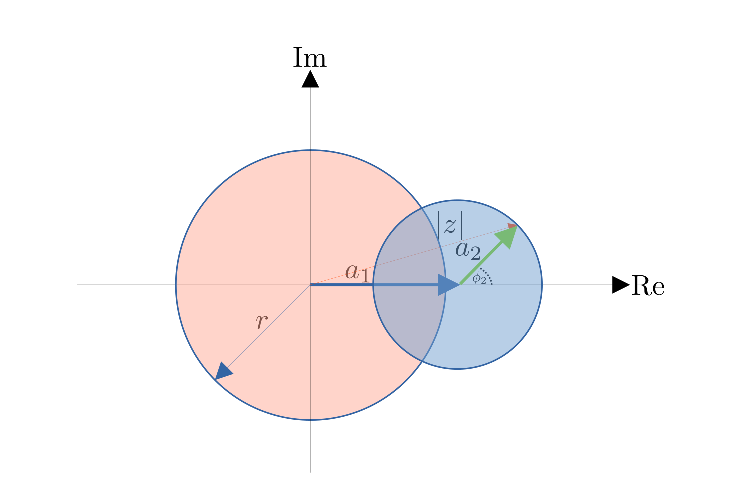}
  \caption{Plotting $z=a_1e^{i\phi_1}+a_2e^{i\phi_2}$, with~$\phi_1=0$, in the complex plane.}
  \label{fig:two_d}
\end{center}
\end{figure}
  We have
\begin{align*}
    |z|^2&=\big(a_1+a_2\cos(\phi_2)\big)^2+\big(a_2\sin(\phi_2)\big)^2\\
    &=a_1^2+2a_1a_2\cos(\phi_2)+a_2^2 .
 \end{align*}
   Let  
\begin{align}\label{eq:def_q}
    q(r;a_1,a_2):=\frac{r^2-(a_1^2+a_2^2)}{2a_1a_2}.
\end{align}
Then~$|z|^2\leq r^2$ iff
\begin{align*}
    \cos(\phi_2)\leq q ,
\end{align*}
that is, iff 
\begin{align*}
    \arccos (q) \leq\phi_2\leq 2\pi-\arccos(q).
\end{align*}
 Eq.~\eqref{eq:W_n(r)} now gives 
\begin{align}\label{eq:W_2(r)}
    W_2(r)&=\frac{1}{(2\pi)^2}\int_0^{2\pi}d\phi_1\int_{\arccos(q)}^{2\pi-\arccos(q)}d\phi_2\nonumber\\
    &=\frac{1}{2\pi}\big(2\pi-2\arccos( q) \big)\nonumber\\
    &=1-\frac{\arccos(q)}{\pi}.
\end{align}
Note that~$ W_2(r)\in[0,1]  $, as expected.  If~$r=a_1+a_2$  then~$q=1$  and~\eqref{eq:W_2(r)} gives~$W_2(r)=1$.
If~$r=|a_1-a_2|$  then~$q=-1$  and~\eqref{eq:W_2(r)} gives~$W_2(r)=0$ (see Fig.~\ref{fig:two_d}). 

\end{example}

The BBW formula   asserts  that \begin{equation}\label{BWW}
W_n(r;a_1,\dots,a_n)=r\int_0^\infty J_1(r\rho)\prod_{k=1}^n J_0(|a_k|\rho)d\rho,
\end{equation}
where $J_0,\,J_1$ are Bessel functions. Note that this
reduces the computation of the $n$-dimensional integral for~$W_n$ in~\eqref{eq:W_n(r)}
to a one-dimensional integral. 

For the sake of completeness, we include a self-contained
proof of~\eqref{BWW}
 in the Appendix.

\subsection{Solution of the mean motion problem}
Returning to Problem~\ref{prob:mean_motion}, 
  we say that the $\lambda_k$s in~\eqref{eq:rotating}
  are
  \emph{non-resonant}
  if there  are no non-trivial relations of the form 
  \be\label{eq:non_rse}
  \sum_{k=1}^n  \lambda_k \ell_k=0, \text{ where every }
\ell_k \text{ is an  integer}.
\ee
 Hermann Weyl \cite{Weyl_meanmotion} proved that under this condition the mean motion~$\Omega$ in~\eqref{eq:def_omeha}   exists, and satisfies
  \begin{equation}\label{mean}
\Omega=\sum_{k=1}^n \lambda_k V_k ,
\end{equation}
where
\be\label{eq:vks}
V_k : =W_{n-1}(a_k;a_1,\dots,a_{k-1},a_{k+1},\dots,a_n).
\ee
Furthermore,
the  $V_k$s  are non-negative, and $\sum_{k=1}^n V_k=1$~\cite{Weyl_meanmotion}.
In particular, if~$\lambda_k>0$ for all~$k$ then~$\Omega>0$, and if   all the~$\lambda_k$s are equal to a common value~$\lambda$ then Eq.~\eqref{mean} gives
\[ \Omega=\sum_{k=1}^n \lambda V_k=\lambda.
\]

Thus, the mean motion~$\Omega$ is a weighted  average of  the angular velocities $\lambda_k$.
The weight~$V_k$
corresponding to~$\lambda_k$
is  the ``volume''  of the set of points~$(\phi_1,\dots,\phi_{k-1},\phi_{k},\dots,\phi_n)$
in an~$(n-1)$-dimensional torus
such that~$   | {\displaystyle
\sum _{   
\ell \not = k}  }
a_\ell e^{i\phi_\ell} |\leq |a_k|$.

\begin{example}
    Consider the case~$n=2$, that is,
    \[
    z(t)=a_1 e^{i(\lambda_1 t+ \mu_1) } +a_2 e^{ i(\lambda_2 t+ \mu_2) },
    \]
    and assume that~$|a_1|>|a_2|$. In this case, $a_2 e^{i(\lambda_2 t+ \mu_2) }$ is a ``small perturbation'' added to~$a_1 e^{i(\lambda_1 t+ \mu_1) }$.
  Eq.~\eqref{mean} gives~$\Omega=\lambda_1 V_1+\lambda_2 V_2$, where~$V_1$ is the
  probability that~$|a_2e^{i \phi}|\leq a_1$, that is,~$V_1=1$ (and similarly~$V_2=0$). Thus, in this case~$\Omega=\lambda_1$. 
\end{example}

 Note that combining
 Eqs.~\eqref{mean}, \eqref{eq:vks},
 and the BBW-formula
 yields
 a closed-form  expression for the mean motion~$\Omega$
 in terms of  integrals of  Bessel functions.

\begin{example}
     Consider the case~$n=3$, $a_1= 1$,
     $a_2 =2.5$, $a_3=3$, $\lambda_1=\sqrt{2}$,
     $\lambda_2=3 $,~$\lambda_3=\sqrt{3}$
      and~$\mu_i=0$, $i=1,2,3$, so 
\begin{align*}
    z(t)=  e^{i \sqrt{2} t }+2.5 e^{i 3 t }+3  e^{i  \sqrt{3} t }.
\end{align*}
Then
\begin{align*}
    \Omega &= \sum_{i=1}^3 \lambda_i V_i\\
    &= \sqrt{2} W_2(1; 2.5,3)+3 W_2(2.5; 1,3)+\sqrt{3} W_2(3; 1,2.5)\\
    &= \sqrt{2}   \int_0^\infty J_1( \rho) 
    J_0( 2.5 \rho)J_0( 3 \rho)d\rho\\
    &+ 7.5   \int_0^\infty J_1( 2.5 \rho) 
    J_0(  \rho)J_0( 3 \rho)d\rho\\
    &+3\sqrt{3}   \int_0^\infty J_1( 3 \rho) 
    J_0(  \rho)J_0( 2.5 \rho)d\rho.
\end{align*}
Numerical integration  using Matlab gives
\[
\Omega=2.0614
\]
(to 4-digit accuracy). Fig.~\ref{fig:bessel}
 shows the value~$\frac{\arg(z(T)) }{T}$ as a function of~$T$, and it may be seen that this indeed converges to~$\Omega$ as~$T\to\infty$. 
\end{example}

\begin{figure}[t]
\centering
\includegraphics[scale=0.6]{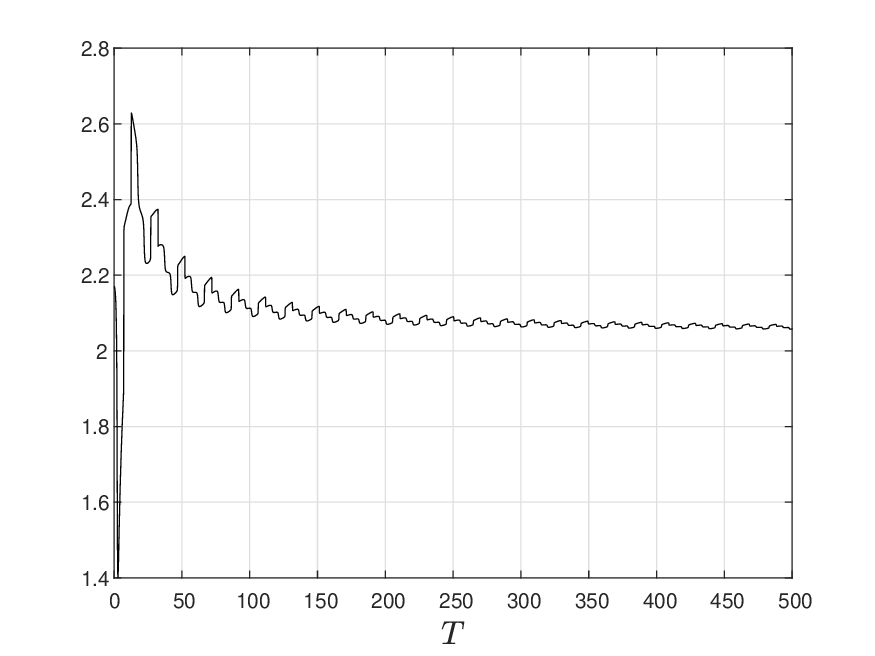}
\caption{Plotting~$\frac{\arg(z(T)) }{T}$ as a function of~$T$. \label{fig:bessel}}
\end{figure}

\begin{remark}
    The BBW formula is a solution to a ``static'' problem of determining a ``volume'' of angles in the torus that satisfies a certain  inequality. The mean motion problem is ``dynamic'',  as it considers the asymptotic behaviour of the time-dependent function~$z(t)$. For the sake of completeness, we give an informal explanation on how these two problems are related. 
Since~$z(t)=|z(t)|e^{i\Phi(t)}$, we have~$\Phi(t)=\imag(\log(z(t)))$, so 
\begin{align*}
    \dot \Phi(t) &= \imag (\frac{\dot z(t)}{z(t)})\\
    &=\imag (  \frac{\sum a_k\lambda_k   i e^{i(\lambda_k t +\mu_k)}}{\sum a_k e^{ i(\lambda_k t+\mu_k)}}  ) \\
    &=\real (  \frac{\sum a_k\lambda_k     e^{i(\lambda_k t +\mu_k)}}{\sum a_k e^{ i(\lambda_k t+\mu_k)}}  ) .
\end{align*}
This implies that we can view~$\dot\Phi(t)$ as a mapping from~$(\lambda_1 t,\dots,\lambda_n t)\in\T$
to~$\R$. Now, 
\begin{align}\label{eq:intg_rih}
\Omega&=\lim_{t\to\infty}\frac {\Phi(t)-\Phi(0)}{t}\nonumber\\
&=\lim_{t\to\infty}
\frac1{t}\int_0^t \dot \Phi(s)ds,
\end{align}
  and the non-resonance condition implies that we can calculate this  integral 
using  a ``static'' integral over the torus. 
\end{remark}

\begin{remark}
Eq.~\eqref{mean} holds even if condition~\eqref{eq:non_rse} is relaxed to  the following {\em almost non-resonancy} condition: there  are no non-trivial relations of the form $\sum_{\ell=1}^n \lambda_k \ell_k=0$, where
the~$\ell_k$s are integers, and $\sum_{k=1}^n \ell_k=0$. If~$\lambda_1=\dots=\lambda_n$
then the non-resonance condition fails, but the almost non-resonance condition holds. 
\end{remark}

\begin{remark}
Writing~\eqref{eq:rotating}  as
\[
z(t)   =\sum_{k=1}^n (a_k e^{i \mu_k} ) e^{i \lambda_k t  } ,
\]
and noting that~$\Omega$ does not depend on the~$\mu_k$s 
implies that we can actually assume that
the~$a_k$s in~\eqref{eq:rotating}  
are complex numbers,
and the formula for~$\Omega$ will depend
only on  the absolute values of the~$a_k$s. 
\end{remark}
\section{Main result}\label{sec:density}

If the spectrum of~$A $ in~\eqref{eq:lin_cont_sys} is real then it is well-known (see, e.g., \cite[Theorem~6-8]{athans_falb})  that~$N(T)\leq n-1$ for any~$T > 0$. We consider the    case where  all the eigenvalues of~$A$ are purely imaginary. 
 
\begin{assumption}\label{assump:dominanat}
The dimension~$n$ is even and the 
eigenvalues of~$A\in\R^{n\times n}$ are 
\[
i\lambda_1, -i\lambda_1 ,\dots, i\lambda_{n/2}, -i\lambda_{n/2}, 
\]
where all the~$\lambda_k$s are real. 
\end{assumption}

We can now state our main result. 

\begin{theorem}\label{thm:main}
Suppose that~$A$ satisfies Assumption~\ref{assump:dominanat} and that the pair~$(A,b) $ is controllable. 
 Then, for   generic vectors~$p,b\in\R^{n}\setminus\{0\}$
 there exists~$c>0$ such that 
   for any~$T$ large enough  the number of zeros of the switching function~$m$ in~\eqref{eq:abst_m} on the interval~$[0,T]$ satisfies
   \[
   N(T)\geq c T+o(T).
   \]
\end{theorem}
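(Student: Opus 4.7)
The plan is to reduce the zero-counting of the switching function $m(t)=p^\top e^{-At}b$ to the mean-motion problem, and then apply Weyl's theorem together with the BWW formula~\eqref{BWW}.

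First, since the eigenvalues of $A$ come in conjugate pairs $\pm i\lambda_k$ ($k=1,\dots,n/2$), diagonalizing $A$ over $\C$ and using the reality of $m$ yields the expansion
\[
m(t)=\sum_{k=1}^{n/2}\bigl(\alpha_k e^{-i\lambda_k t}+\bar\alpha_k e^{i\lambda_k t}\bigr)=2\Re(z(t)),\qquad z(t):=\sum_{k=1}^{n/2}|\alpha_k|e^{i(-\lambda_k t+\arg\alpha_k)},
\]
where the coefficients $\alpha_k=\alpha_k(p,b)\in\C$ are bilinear in $(p,b)$. Controllability of $(A,b)$ makes the map $(p,b)\mapsto(\alpha_1,\dots,\alpha_{n/2})$ non-degenerate, so for generic $p,b$ every $\alpha_k\neq 0$ and $z$ is precisely of the form~\eqref{eq:rotating} with frequencies $-\lambda_k$.

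Second, observe that $m(t)=0\Longleftrightarrow\Phi(t):=\arg z(t)\equiv\pi/2\pmod\pi$, so the plan is to count the crossings of the lattice $\pi/2+\pi\Z$ by the continuous lift $\Phi$. This requires two further generic conditions: (a) almost non-resonance of $\{\lambda_k\}$, needed to invoke Weyl's theorem, which is generic in the spectrum of $A$; and (b) $z(t)\neq 0$ for all $t\geq 0$, which is generic in $(p,b)$ because the ``closing-polygon'' locus $\{(\phi_1,\dots,\phi_{n/2})\in\mathbb{T}^{n/2}:\sum_k|\alpha_k|e^{i\phi_k}=0\}$ has codimension $2$ in the torus, while the trajectory $t\mapsto(-\lambda_k t+\arg\alpha_k)_k$ is one-dimensional and, under (a), equidistributes on a subtorus.

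Third, Weyl's theorem then yields the mean motion
\[
\Omega=\lim_{t\to\infty}\frac{\Phi(t)}{t}=-\sum_{k=1}^{n/2}\lambda_k V_k,\qquad V_k=W_{n/2-1}\bigl(|\alpha_k|;|\alpha_1|,\dots,\widehat{|\alpha_k|},\dots,|\alpha_{n/2}|\bigr),
\]
each $V_k$ being expressible as a Bessel integral via~\eqref{BWW}. Taking representatives $\lambda_k>0$ (one from each pair $\{i\lambda_k,-i\lambda_k\}$), the weights $V_k\geq 0$ sum to $1$, so $|\Omega|>0$. Since $z$ is nowhere zero, $\Phi$ is continuous on $[0,T]$ with $\Phi(T)-\Phi(0)=\Omega T+o(T)$, and the intermediate value theorem forces $\Phi$ to hit every element of $\pi/2+\pi\Z$ lying between $\Phi(0)$ and $\Phi(T)$, yielding $N(T)\geq\tfrac{|\Omega|}{\pi}T+o(T)$. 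The theorem follows with $c=|\Omega|/\pi>0$ and a closed-form Bessel-integral expression for $c$ coming from the BWW identity.

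I expect the main obstacle to be step (b): carving out the explicit open and dense set of $(p,b)$ for which $z$ never vanishes. Isolated zeros of $z$ would produce jumps of $\pm\pi$ in $\Phi$ and complicate the direct IVT-based count, so one must convert the codimension-$2$ avoidance of the zero set into an explicit non-degeneracy condition on $(p,b)$ using the bilinear dependence of the $\alpha_k$ inherited from the diagonalization of $A$. Once this is done, the remaining steps are essentially a packaging of Weyl's theorem and the BWW formula.
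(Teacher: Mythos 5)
Your proposal follows essentially the same route as the paper's own proof: write $m(t)=\Re\bigl(z(t)\bigr)$ with $z(t)=\sum_k a_k e^{i\lambda_k t}$, invoke Weyl's solution of the mean motion problem to obtain the asymptotic angular velocity $\Omega$, count crossings of $\arg z$ through $\pi/2+\pi\Z$ to get $N(T)\geq\frac{|\Omega|}{\pi}T+o(T)$, and use the BWW formula~\eqref{BWW} for the closed-form constant. If anything, you are more explicit than the paper about the genericity conditions required (non-vanishing of $z$, non-resonance of the $\lambda_k$, and positivity of $\Omega$ via $\sum_k V_k=1$), which the paper's proof leaves implicit.
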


The proof of this result, given below, can actually be used to
provide a closed-form
expression for~$c$ in terms of a  solution to 
the mean motion problem. 

\begin{IEEEproof}
We may assume that there exists a non-singular matrix~$H\in\R^{n\times n}$ such that~$A=H^{-1}\diag(B_1,\dots,B_{n/2})H$, with
\[
B_i:=\begin{bmatrix} 0& \lambda_i\\-\lambda_i &0 \end{bmatrix}.
\]
The switching function~\eqref{eq:abst_m}  
becomes 
\begin{align*} 
m(t) &= p^\top  H^{-1} \diag(\exp(B_1 t),\dots,\exp(B_{n/2}t)) H b\\
&=  \sum_{k=1}^{n/2}  c_k \cos(\lambda_k t)+d_k \sin(\lambda_k t),
\end{align*}
where~$c_k,d_k\in\R$   depend on  the 
entries     of~$b$, $p$, and~$H$. Thus, 
\begin{align*} 
m(t) &= \real( \sum_{k=1}^{n/2}  c_k e^{i \lambda_k t}+
d_k e^{i(    \lambda_k t -(\pi/2) )}  )
\\
&=  \real(  \sum_{k=1}^{n/2}   a_k  e^{i \lambda_k t}   ) ,
\end{align*}
with~$a_k:=c_k+d_ke^{-i\pi/2}=c_k-id_k$. 
Define 
\[
z(t):= \sum_{k=1}^{n/2}   a_k  e^{i \lambda_k t} .
\]
 Every time~$t_i\geq 0 $ such that~$ \arg(z(t_i)) = \pi/2$ or~$\arg(z(t_i)) =3  \pi/2  $
is a zero of the swicthing function~$m(t)$. Consider the asymptotic
behavior of~$\arg(z(t))$ on the interval~$ [0,T]$, with~$T$  large.  
The solution of the mean motion problem implies that
\[\
\Omega : = \lim_{T\to\infty}  \frac{ \arg(z(T))-\arg(z(0))} {T} 
\]
exists and satisfies~\eqref{mean}. Thus, for~$T$ large enough  the number of zeros of~$m$ on the interval~$[0,T]$
is bounded from below by
\[
\frac{|\Omega|}{\pi}T+o(T) , 
\]
and this completes the proof of Theorem~\ref{thm:main}. 
\end{IEEEproof}

\begin{example}
Consider  the case where~$n=4$,
\[
A=\begin{bmatrix}
    0 & \zeta_1 & 0 &0 \\
    -\zeta_1 &0 &0 &0\\
    0&0&0&\zeta_2\\
    0 & 0& -\zeta_2 &0 
\end{bmatrix},
\]
with~$\zeta_1,\zeta_2\not =0$,  
$\zeta_1>\zeta_2$,
and~$b=\begin{bmatrix}
   0& 1&0&1
\end{bmatrix}^\top$. 
Thus,~$A$ has distinct
purely imaginary eigenvalues:~$i\zeta_1,-i\zeta_1,i\zeta_2,-i\zeta_2$. 
A calculation shows that  
\begin{align*}
\det \begin{bmatrix}
    b& Ab & A^2b&A^3b
\end{bmatrix} &=\zeta_1 \zeta_2 (\zeta_1^2-\zeta_2^2)^2\\&\not=0,
\end{align*}
so the pair~$(A,b)$ is controllable. 
System~\eqref{eq:lin_cont_sys} becomes
\begin{align}\label{oscillators}
\ddot x_1 &+\zeta_1^2 x_1=\zeta_1 u,\nonumber\\
\ddot x_3&+\zeta_2^2 x_3=\zeta_2 u. \nonumber
\end{align}
This describes  the motion of two  linear oscillators (e.g., springs)
coupled  by a common  forcing term~$u(t) $ (and in the optimal control problem we assume that~$|u(t)|\leq 1$ for all~$t$).
In this case, 
\begin{align*}
  m(t)&=p^\top  e^{-At} b \\
    &= -p_1\sin(\zeta_1 t) -p_3\sin(\zeta_2 t) 
     +p_2 \cos(\zeta_1 t) +p_4 \cos(\zeta_2 t) .
\end{align*}
This can    be expressed as
\begin{align*}
     m(t)&= \real ( - p_1 e^{i(\zeta_1  t-(\pi/2))}  -p_3 e^{i(\zeta_2 t-(\pi/2))}\\&+p_2  e^{i \zeta_1 t}+p_4
  e^{i \zeta_2 t} )   \\
  &=\real (a_1 e^{i\zeta_1 t} +  a_2 e^{i\zeta_2 t}
  ) ,
\end{align*}
 with~$a_1:=-p_1 e^{-i\pi/2}+p_2$ and~$a_2:=-p_3 e^{-i\pi/2} +p_4 $.    
\end{example}

For low values of~$n$  it is possible to give more accurate bounds
for~$N(T)$ and show that they agree with the asymptotic lower bound in Theorem~\ref{thm:main}.
The next result  demonstrates this.

\begin{proposition}\label{prop:two_oscillators}
   Consider the case where
    \begin{align*} 
       m(t)=\real(a_1e^{i\lambda_1 t}+a_2e^{i\lambda_2 t}), 
    \end{align*}
    with~$a_1,a_2\in\R\setminus\{0\}$,
  and  assume, without loss of generality, that
    \[
    \lambda_1>\lambda_2.
    \]
  If~$a_1>a_2$ then
     \begin{align} \label{eq:a1la2}
        \frac{\lambda_1}{\pi}T\leq N(T)\leq\frac{\lambda_1}{\pi}T+1,\text{ for all }T>0.
    \end{align}
If~$a_1<a_2$ then 
 \begin{align} \label{eq:seclow}
        \frac{\lambda_2}{\pi}T\leq N(T)\leq\frac{\lambda_1}{\pi}T,\text{ for all }T>0.
    \end{align}
    \end{proposition}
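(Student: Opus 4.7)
My plan is to recast the zeros of $m$ as intersections of the complex curve
\[
z(t):=a_1 e^{i\lambda_1 t}+a_2 e^{i\lambda_2 t}
\]
with the imaginary axis, so that $N(T)$ equals the number of times a continuous branch of the argument $\theta(t):=\arg z(t)$ meets the discrete set $\tfrac{\pi}{2}+\pi\Z$ on $[0,T]$. Depending on which amplitude dominates, I would factor either as $z(t)=e^{i\lambda_1 t}(a_1+a_2 e^{-i(\lambda_1-\lambda_2)t})$ or as $z(t)=e^{i\lambda_2 t}(a_2+a_1 e^{i(\lambda_1-\lambda_2)t})$, choosing the factorization whose second factor does not vanish. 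This expresses $\theta(t)$ as a linear-in-$t$ term plus a bounded, continuous, principal-branch perturbation of modulus strictly below $\pi/2$.

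For Case~1 ($a_1>a_2$) the first factorization gives a non-vanishing second factor $w(t)$ whose principal argument lies in $(-\arcsin(a_2/a_1),\arcsin(a_2/a_1))$. I would then check that $\theta$ is strictly increasing by expanding
\[
\imag(\dot z\,\bar z)=\lambda_1 a_1^2+\lambda_2 a_2^2+(\lambda_1+\lambda_2)a_1 a_2\cos((\lambda_1-\lambda_2)t),
\]
whose minimum over $t$ equals $(a_1-a_2)(\lambda_1 a_1-\lambda_2 a_2)$ and is positive under the stated hypotheses. Strict monotonicity turns $N(T)$ into an integer count of points of $\tfrac{\pi}{2}+\pi\Z$ inside $[\theta(0),\theta(T)]$, and since $\theta(T)-\theta(0)=\lambda_1 T+\delta$ with $|\delta|<\pi$, a careful boundary count delivers the sandwich~\eqref{eq:a1la2}.

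For Case~2 ($a_1<a_2$) the symmetric factorization yields $\theta(T)-\theta(0)=\lambda_2 T+\delta'$ with $|\delta'|<\pi$, but monotonicity of $\theta$ can now fail. The lower bound in~\eqref{eq:seclow} then follows because a continuous function must cross the level set $\tfrac{\pi}{2}+\pi\Z$ at least as many times as the magnitude of its net change divided by $\pi$, giving $N(T)\ge\lambda_2 T/\pi$. For the upper bound $N(T)\le\lambda_1 T/\pi$ I would invoke the classical zero-density estimate for real trigonometric polynomials whose spectrum lies in $\{\pm\lambda_1,\pm\lambda_2\}$: the zero density on any interval is controlled by the highest frequency present, which is~$\lambda_1$.

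The main obstacle I anticipate is the sharp integer-counting step, which must emerge without stray additive slack, particularly the clean ``$+1$'' appearing in~\eqref{eq:a1la2}. This requires precise bookkeeping of the continuous branch of $\arg z$ at $t=0$ and $t=T$ relative to the lattice $\tfrac{\pi}{2}+\pi\Z$, and in Case~2 a correct invocation of the frequency-based zero-counting inequality with exactly the stated constant.
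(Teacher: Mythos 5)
Your route is genuinely different from the paper's: you count crossings of a continuous branch of $\theta(t)=\arg z(t)$ with the lattice $\tfrac{\pi}{2}+\pi\Z$, whereas the paper rewrites $m(t)=0$ as $\cos(\lambda_2 t)=-\tfrac{a_1}{a_2}\cos(\lambda_1 t)$ and counts intersections on the monotonicity intervals $[\tfrac{k\pi}{\lambda_1},\tfrac{(k+1)\pi}{\lambda_1}]$ of $\cos(\lambda_1 t)$. In Case~1 your plan is sound and in fact tightens the paper's argument: the identity $\imag(\dot z\,\bar z)=\lambda_1a_1^2+\lambda_2a_2^2+(\lambda_1+\lambda_2)a_1a_2\cos((\lambda_1-\lambda_2)t)$ is correct, its minimum $(a_1-a_2)(\lambda_1a_1-\lambda_2a_2)$ is positive under the implicit normalization $a_1>a_2>0$, $\lambda_1>\lambda_2>0$ (which the paper also uses without saying so), and strict monotonicity of $\theta$ together with $|\arg(a_1+a_2e^{-i(\lambda_1-\lambda_2)t})|<\pi/2$ gives exactly one lattice crossing per half-period of $\cos(\lambda_1 t)$ --- precisely the paper's ``exactly one intersection'' claim, which the paper asserts without justification.

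The genuine gap is the Case~2 upper bound. Once $\theta$ fails to be monotone, counting its net change only bounds $N(T)$ from below; a non-monotone $\theta$ may recross the same level of $\tfrac{\pi}{2}+\pi\Z$ many times, so your framework produces no upper bound at all. You outsource this step to a ``classical zero-density estimate \dots with exactly the stated constant,'' but the classical bounds on zeros of real exponential sums control the density only up to an additive constant depending on the number of terms; there is no constant-free result to cite, and invoking one does not constitute a proof. The paper instead argues directly that on each interval $[\tfrac{k\pi}{\lambda_1},\tfrac{(k+1)\pi}{\lambda_1}]$ the monotone function $-\tfrac{a_1}{a_2}\cos(\lambda_1 t)$, whose range now lies inside $(-1,1)$, meets $\cos(\lambda_2 t)$ at most once; the underlying reason is that two meetings would force $\lambda_2 t$ to sweep an arc of length at least $\pi$ inside an interval of length $\pi/\lambda_1$, contradicting $\lambda_2<\lambda_1$. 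You need to supply an elementary argument of this kind. Separately, the ``sharp integer-counting step'' you flag is not merely delicate but lossy: both your count and the paper's yield floor/ceiling versions of the bounds (e.g.\ $N(T)\geq\lfloor\lambda_1T/\pi\rfloor$ rather than $N(T)\geq\lambda_1T/\pi$), and the literal inequalities in~\eqref{eq:a1la2} and~\eqref{eq:seclow} fail for small $T$ by an $O(1)$ term (take $a_2$ small and $T$ just below the first zero near $\tfrac{\pi}{2\lambda_1}$); this slack is already present in the paper's own proof, and your plan cannot remove it.
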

    \begin{IEEEproof}
A time~$t_i$ is zero of~$m(t)$ iff
\[
 \cos(\lambda_2 t_i) =-\frac{a_1}{a_2} \cos(\lambda_1 t_i).
\]
Suppose that~$a_1>a_2$. 
Fix an integer~$\ell\geq 0$, and consider the time interval
\[
L_1:=[\frac{2\pi\ell}{\lambda_1},  \frac{2\pi\ell}{\lambda_1}+\frac{\pi}{\lambda_1}].
\]
On this time interval,~$-\frac{a_1}{a_2}\cos(\lambda_1 t)$ is monotonic, and attains all the values from~$-a_1/a_2$ to~$a_1/ a_2$ and since~$a_1>a_2$, we conclude that it intersects~$\cos(\lambda_2t)$ at least once. Also, since~$\lambda_1>\lambda_2$ there is exactly one such intersection, so $m(t)$ has   a single  zero on~$L_1$. A similar argument shows that~$m(t)$ has exactly one zero on 
\[
L_2:=[   \frac{2\pi\ell}{\lambda_1}+\frac{\pi}{\lambda_1}, \frac{2\pi(\ell+1)}{\lambda_1}  ].
\]
Thus, there are two zeros of~$m(t)$ in  every period of~$   \cos(\lambda_1 t)$. This proves~\eqref{eq:a1la2}.

Now consider the case~$a_1<a_2$.  The upper bound on~$N(T)$ in~\eqref{eq:seclow} is obtained arguing similarly as above, but noting that now~$|a_1/a_2|<1$. 
To derive the lower  bound, 
fix an integer~$\ell\geq 0$, and consider the time interval
\[
\tilde L_1:=[\frac{2\pi\ell}{\lambda_2},  \frac{2\pi\ell}{\lambda_2}+\frac{\pi}{\lambda_2}].
\]
On this time interval,~$ \cos(\lambda_2 t)$ is monotonic, and attains all the values in the range~$[-1,1] $.  Since~$-\frac{a_1}{a_2}\cos(\lambda_1t)$ takes values inside the interval~$(-1,1)$, there is at least one zero of~$m(t)$ in~$\tilde L_1$.  Arguing similarly as above on the interval
\[
\tilde L_2:=[  \frac{2\pi\ell}{\lambda_2}+\frac{\pi}{\lambda_2},
\frac{2\pi(\ell+1)}{\lambda_2}  ] 
\]
gives the lower bound on~$N(T)$ in~\eqref{eq:seclow}. 
    \end{IEEEproof}

Not that  Proposition~\ref{prop:two_oscillators} implies that 
    \begin{align*} 
        \frac{\lambda_2 }{\pi}T\leq N(T),  \text{ for all } T>0,
    \end{align*}
  and this agrees with the asymptotic 
  linear bound  in Theorem~\ref{thm:main}.

\section{Discussion}
We studied  the number of switching  points~$N(T)$ in time-optimal controls of a single-input linear system on the interval~$[0,T]$. We showed that when  all the eigenvalues
of~$A$ are purely imaginary 
the number of switching points
is lower-bounded by~$c T$ for large~$T$.
By relating the question to the
solution of
the mean motion problem, we also provided 
 an explicit formula for~$c$ in terms of integrals  of Bessel functions.
To the best of our knowledge, this is the first connection between optimal control theory and the mean motion problem.

An interesting question is whether the
bound can be strengthened to
\begin{equation}\label{asymp_eq2}
N(T)= \tilde c T+o(T), \text{ as } T\to\infty,
\end{equation}
with an  explicit positive parameter~$\tilde c$.
The relation to the mean motion problem suggests that~$\widetilde c=\frac{|\Omega|}{\pi}$, where~$\Omega=\sum \lambda_k V_k  $, and the probabilities $$V_k:=W_{n-1}(a_k;a_1,\dots,a_{k-1},a_{k+1},\dots,a_n)$$ are given by the BWW formula \eqref{BWW}.
Another natural research direction is to extend the analysis to the case where~$A$ has  a more general spectral structure.

 \section*{Appendix: proof of the BWW formula }
Let~$\xi:=\sum_{k=1}^n  a_ke^{i\phi_k}\in \C$.
Fix~$r>0$.
By definition,
\be\label{eq:bydef_W}
W_n(r)=\frac1{(2\pi)^n}\int_{\T} 1_{|\xi|\leq r}(\xi)  \prod_{k=1}^n d\phi_k . 
\ee
Writing
\begin{equation}
1_{|\xi|\leq r}(\xi)=1_{B^2}(\xi/r),
\end{equation}
and applying an inverse Fourier transform to~\eqref{eq:ind_func_B2} yields
\begin{align*}
1_{|\xi|\leq r}(\xi)&=1_{B^2}(\xi/r)\\&=\frac{1}{2\pi}\int_\C |\eta|^{-1} J_1(|\eta|)e^{i\Re  (\bar\eta \xi/r )} d\eta .
\end{align*}
Defining the integration variable
  $\theta:=\eta/r$ gives
\begin{equation}
1_{|\xi|\leq r}(\xi)= \frac{r}{2\pi}\int_\C |\theta|^{-1} J_1(r|\theta|)e^{i\Re  (\bar\theta \xi )} d\theta.
\end{equation}
Substituting  this in~\eqref{eq:bydef_W} yields
\begin{align*}
W_n(r)&=\frac{r}{ 2\pi }\int_{\T}  \int_\C |\theta|^{-1} J_1(r|\theta|)e^{i\Re  (\bar\theta \sum_{k=1}^n a_ke^{i\phi_k}  )} d\theta\\&\times \frac1{(2\pi)^n} \prod_{k=1}^n d\phi_k\\
&= \frac{1}{ 2\pi }\frac{1}{(2\pi)^n}\int_{\T}  \int_\C |\theta|^{-1} J_1(r|\theta|)\\&\times
\left (  \prod_{k=1}^n e^{i\Re  (\bar\theta  a_k e^{i\phi_k}  )}d\phi_k\right)  d\theta  .
\end{align*}
Write~$ \theta$ in the polar representation~$   \theta=\rho e^{i \alpha }$, where $\rho:=|\theta|$. Then
\begin{align*}
 \frac1{2\pi}  \int_0^{2\pi } e^{i\Re  (\bar\theta  a_k e^{i\phi_k}  )}d\phi_k &=\frac1{2\pi}
  \int_0^{2\pi }  e^{i\Re  ( \rho a_k   e^{i(\phi_k-\alpha)}  )}d\phi_k\\
  &=\frac1{2\pi}  \int_0^{2\pi }  e^{i \rho \Re  (  a_k   e^{i \beta_k  }  )}d\beta_k\\
  &= \frac1{2\pi}  \int_0^{2\pi }  e^{i \rho   |a_k|  \cos(\beta_k)  }d\beta_k\\
  &= J_0(\rho |a_k|).
\end{align*}
Thus,
\begin{align*}
W_n(r)
&= \frac{1}{ 2\pi }   \int_\C |\theta|^{-1} J_1(r|\theta|)\left (  \prod_{k=1}^n J_0(|\theta| |a_k|)   \right)  d\theta  .
\end{align*}
 The integrand
 \[
 |\theta|^{-1} J_1(r|\theta|) \prod_{k=1}^n J_0(|\theta| |a_k|)
 \]
 is a radial function, and using known results on the integration of radial functions (see, e.g.,~\cite[Chapter~6]{real_analysis_stromberg})
gives
\begin{align*}
W_n(r)
&=  r \int_0^\infty  J_1(r \rho ) \left (  \prod_{k=1}^n  J_0(\rho |a_k|)    \right)  d\rho,
\end{align*}
and this completes the proof of the BWW formula .



\end{document}